\newtheorem{theorem}{Theorem}[section]
\newtheorem{assumption}[theorem]{Assumption}
\newcommand{\AAA}{{\mathsf{A}}}
\newcommand{\AK}{\mathsf{A}_k}
\newcommand{\BB}{{\mathsf{B}}}
\newcommand{\CC}{{\mathsf{C}}}
\newcommand{\Cg}{\mathsf{C}_g}
\newcommand{\Cgs}{\mathsf{C}_{g^*}}
\newcommand{\II}{{\mathsf{I}}}
\newcommand{\PP}{{\mathsf{P}}}
\newcommand{\GG}{{\mathcal{G}}}
\newcommand{\FF}{{\mathcal{F}}}
\newcommand{\Wg}{{\mathsf{W}}_{\!g}}
\newcommand{\wgi}{{w}_{i}^{g}}
\newcommand{\xx}{\mathbf{x}}
\newcommand{\yy}{\mathbf{y}}
\newcommand{\pp}{\mathbf{p}}
\newcommand{\xg}{\mathbf{x}^g}
\newcommand{\xxg}{\mathbf{x}_g}
\newcommand{\xxgs}{\mathbf{x}_{g^{*}}}
\newcommand{\xxsgs}{\mathbf{x}^*_{g^{*}}}
\newcommand{\yyg}{\mathbf{y}_{g}}
\newcommand{\tg}{\tilde{g}}
\DeclarePairedDelimiter{\norm}{\lVert}{\rVert}
\newcommand{\ee}{\mathbf{e}}
\newcommand{\egi}{{\mathbf{e}_{i}^g}}
\newcommand{\eGi}{{\mathbf{e}_{g(i)}}}
\newcommand{\eGiOne}[1]{{\mathbf{e}_{g_1(#1)}}}
\newcommand{\eGiTwo}[1]{{\mathbf{e}_{g_2(#1)}}}
\newcommand{\bb}{{\mathbf{b}}}
\newcommand{\grouplasso}{Group Lasso }
\DeclareMathOperator*{\argmin}{arg\,min}
\title{Weighted \grouplasso for a static EEG problem}
\author{Ole L{\o}seth Elvetun\thanks{Faculty of Science and Technology, Norwegian University of Life Sciences, {\AA}s, Norway. Email: ole.elvetun@nmbu.no.} and Bj{\o}rn Fredrik Nielsen\thanks{Faculty of Science and Technology, Norwegian University of Life Sciences, {\AA}s, Norway. Email: bjorn.f.nielsen@nmbu.no.} and Niranjana Sudheer\thanks{Faculty of Science and Technology, Norwegian University of Life Sciences, {\AA}s, Norway. Email: niranjana.sudheer@nmbu.no.}}
\begin{document}

\maketitle
\begin{abstract}
 We investigate the weighted \grouplasso formulation for the static inverse electroencephalography (EEG) problem, aiming at reconstructing the unknown underlying neuronal sources from voltage measurements on the scalp. By modelling the three orthogonal dipole components at each location as a single coherent group, we demonstrate that depth bias and orientation bias can be effectively mitigated through the proposed regularization framework. On the theoretical front, we provide concise recovery guarantees for both single and multiple group sources. Our numerical experiments highlight that while theoretical bounds hold for a broad range of weight definitions, the practical reconstruction quality, for cases not covered by the theory, depends significantly on the specific weighting strategy employed. Specifically, employing a truncated Moore-Penrose pseudoinverse for the involved weighting matrix gives a small Dipole Localization Error (DLE). The proposed method offers a robust approach for inverse EEG problems, enabling improved spatial accuracy and a more physiologically realistic reconstruction of neural activity.   
\end{abstract}
\section{Introduction}

Electroencephalography (EEG) source localization aims at reconstructing the unknown underlying neuronal sources from voltage measurements on the scalp. These neuronal sources are typically modelled as current dipoles, each characterized by a specific location and an orientation. Under the quasistatic approximation of Maxwell's equations, the electric potential $\phi$ within the domain of the brain $\Omega$ satisfies Poisson's equation \cite{hamalainen1993magnetoencephalography,sarvas1987basic}, 
\begin{equation}
        \begin{split}
        \nabla \cdot \bigl(\sigma\,\nabla \phi\bigr) &= -\nabla \cdot J_{p} \quad \mbox{in } \Omega, \\
        \bigl(\sigma\nabla \phi\bigr)\cdot n &= 0 \quad \mbox{on } \partial \Omega, 
    \end{split}
    \label{eq:poi_eqn}
\end{equation} 
where $\sigma$ is the electrical conductivity and $J_{p}$ denotes the primary current density. The task of computing the electric potential $\phi$ on the scalp surface $\partial\Omega$ from a given current density $J_p$ is referred to as the direct/forward problem. 

A standard model for $J_{p}$ results from assuming a single current dipole with moment $\mathbf{q}$ located at position $r_{0}$. In distributional form, this is expressed as $$  J_{p}(r) = \mathbf{q}\delta(r -r_{0}),$$ 
where $\delta(r) $ is the Dirac delta distribution. Using this representation, the image of any single current dipole under the forward operator $$\mathcal{K}: J_p \mapsto \phi |_{\partial\Omega}$$ can be represented as a linear combination of three basis dipoles with linearly independent moment directions, corresponding to the components of $\mathbf{q}$,
\begin{equation} \label{eq:Jp_details}
    J_{p}(r) = (q_1 \ee_1+ q_2 \ee_2+ q_3 \ee_3)\delta(r -r_{0}), 
\end{equation}
where $\ee_1, \ee_2, \ee_3$ are the standard unit basis vectors and $\mathbf{q} = (q_1,q_2,q_3)^T$.  

Combining a finite element discretization of \eqref{eq:poi_eqn} with a discrete version of  restricting $\phi$ to $\partial \Omega$, yields a linear system in the form $$ \AAA\xx = \bb,$$ 
where $\AAA \in \mathbb{R}^{m \times n}$ is the lead field matrix, $\bb \in  \mathbb{R}^{m} $ contains the measured EEG data and $\xx  \in \mathbb{R}^{n} $ is the unknown source vector. In this context, $\AAA$ is thus an approximation, expressed in terms of Euclidean spaces, of $\mathcal{K}$. Each candidate source location is represented by three components of $\xx$, cf. \eqref{eq:Jp_details}, corresponding to the predefined dipole moment directions at that location.

It is well known that the forward operator $\mathcal{K}$ is not continuously invertible and that this mapping has a large null space. These properties are typically inherited by the matrix $\AAA$, and hence regularization techniques are essential to ensure stable and reasonable dipole recoveries. 

A common approach to address this ill-posedness is to incorporate prior knowledge about the source structure: Often, in the case of  EEG reconstructions, it is assumed that only a limited number of brain regions are active during cognitive tasks, which leads to the concept of spatially sparse activations \cite{gramfort2012mixed}. This requires the use of sparse recovery approaches, which have been a significant focus of research in recent years; see, e.g.,  \cite{oikonomou2020novel,mannepalli2021sparse,montoya2012structured,donoho2006compressed,gorodnitsky1997sparse}. 

However, standard inversion approaches face two main challenges: \emph{depth bias} and \emph{angle bias}. Depth bias occurs when deeper sources are misclassified as superficial sources, which is known to be a limitation of classical unweighted reconstruction schemes \cite{fuchs1999linear,grave1997linear, pascual1999review}.
Several regularization techniques have been proposed to mitigate depth bias, such as sLORETA and other minimum norm estimates \cite{gorodnitsky1995neuromagnetic,lin2006assessing,lucka2012hierarchical,xu07,pascual2002standardized}. These techniques reduce the depth bias but typically result in excessive smooth reconstructions, which is undesirable in many practical applications.  

Lasso ($\ell_1$  regularization) has also shown effectiveness in EEG reconstructions \cite{duval2017sparse,fuchs2004sparse,grasmair2011necessary,tibshirani1996regression}. Moreover, weighted $\ell^1$ models using iterative reweighting and non-uniform sparsity priors have shown an improved performance, which often produces a sparse solution under specific conditions \cite{candes05,Donoho03,Fuchs04b, candes2008enhancing, garde2016sparsity,Knudsen_2015}. In our previous investigations \cite{Elv21,Elv21c,Elv22,Elv24}, we introduced formally derived weights which involve the projection operator $\PP = \AAA^\dagger\AAA$. That is, the involved basis vectors are weighted by their projection onto the orthogonal complement of the null space of $\AAA$. We have presented several recovery results based on this method, both rigorous analyses and numerical explorations. However, only for identifying scalar sources, i.e., not for computing dipole structures. 

Angle bias arises when component-wise penalties -- often introduced to reduce smoothness or correct depth bias -- also promote directions, as a side effect, aligned with the basis dipoles. This can lead to poor or incomplete reconstruction of the source structure \cite{gramfort2012mixed, ou2009distributed}. 

To address both depth and angle bias, it seems more appropriate to enforce sparsity at the group level, where the three components of 
$\xx$ corresponding to the three predefined moment directions of a single source location are treated as one group. This falls naturally into the concept of \grouplasso \cite{yuan2006model}, which enforces sparsity by selecting all or none of the variables within a group, thus avoiding angle bias and correctly obtaining the spatial location. \grouplasso has shown significant relevance in areas such as bioinformatics, machine learning, and brain imaging \cite{meier2008group,jacob2009group,jenatton2011structured,gramfort2012mixed,lim2017sparse}.

In the group formulation, we partition the source vector $\xx$ into  \(|G|\) groups  
\begin{equation*}
    \xx =(\,\xx_{g_1}^T, \xx_{g_2}^T, \dots,  \xx_{g_{|G|}}^T\,)^T
\end{equation*}
  where each $\xx_{g_i} \in \mathbb{R}^3 $ contains the three components associated with the $i^{\text{th}}$ dipole, cf. \eqref{eq:Jp_details}. The set $G$ of groups is defined below, and $| G |$ represents the number of groups.  
  Then, employing standard \grouplasso regularization, leads to the following problem:
       \begin{equation*}
       \min_{\xx} \left\{ \frac{1}{2}\|\AAA\xx - \bb\|_{2}^2 + \alpha \sum_{g \in G} \|\xx_{g}\|_{2} \right\},   
    \end{equation*}
where $\alpha > 0$ is a regularization parameter. We will not use this approach, but instead employ a suitable weighted version of it. 

Note that if each group $g$ contains only one single variable, i.e. $\xx_{g} \in \mathbb{R}$, then the $\ell_2$-norm becomes the absolute value, and hence the \grouplasso approach reduces to standard Lasso. Whereas when there is only one group that contains all the variables, i.e. $|G| = 1$, then the penalty term/regularization term simply becomes $\alpha \|\xx\|_{2}$. 

\section{Theoretical background}

\subsection{The \grouplasso formulation}
Since the introduction of the \grouplasso by Yuan and Lin \cite{yuan2006model}, the method has become a significant subject of research in statistics, optimization, and inverse problems. The framework was later refined by the standardized \grouplasso formulation \cite{simon2012standardization}, which obviates the need for orthonormalization within each group.

Several theoretical findings have been established in recent years. For instance, Bach \cite{bach2008consistency} studied \grouplasso for both finite- and infinite-dimensional groups, providing necessary and sufficient conditions for model consistency. This work also established a link between \grouplasso and multiple kernel learning \cite{rakotomamonjy2008simplemkl}. Further recovery guarantees have been analyzed in \cite{bach2008consistency,article,eldar09}, with recent work offering improved theoretical bounds for support recovery \cite{elyaderani2017improved}, notably applied in the context of structural health monitoring.

Beyond consistency, researchers have investigated the specific sparsity properties of \grouplasso, particularly for feature selection \cite{huang2009learning,cai2022sparse}. The concept of strong group sparsity was developed in \cite{huang2010benefit}, demonstrating scenarios where \grouplasso outperforms standard Lasso. These findings have motivated several extensions of the original formulation.

Significant effort has been dedicated to developing efficient numerical algorithms for solving \grouplasso problems, including coordinate descent, proximal gradient methods, and ADMM \cite{friedman2010regularization,klosa2020seagull,deng2013group}. A prominent approach is the block coordinate descent algorithm proposed for logistic regression models \cite{meier2008group}. Other works focus on optimization techniques for structured sparsity, such as overlapping and hierarchical group structures \cite{jenatton2011proximal,qi2024non,jacob2009group}.

\subsection{Application to EEG Source Localization}

There have been several studies aligned with the focus of the current paper, specifically regarding the application of \grouplasso methods to the static EEG inverse problem. As we mentioned in the introduction, the primary challenge in this context is the modelling of the free orientation of neural sources without introducing bias. To address this, works such as \cite{haufe2008sparse, gramfort2012mixed, lim2017sparse} have utilized the \grouplasso framework to bundle the three orthogonal dipole components at each spatial location. By applying an isotropic group penalty to these co-located components, these methods enforce rotational invariance, ensuring that the recovered source amplitudes are independent of the specific choice of coordinate system. This approach effectively mitigates the "grid effect" often observed in standard $\ell_1$- regularization, where reconstructed dipoles tend to align artificially with the Cartesian axes.

We contribute to this development by deriving concise recovery guarantees -- with transparent proofs -- for both Weighted Group Pursuit and Weighted \grouplasso problems. Furthermore, we demonstrate a critical distinction between theory and practice: While theoretical recovery guarantees hold for a broad range of weight choices, the empirical reconstruction quality, for cases not covered by the theory, depends significantly on the specific weighting strategy employed.

\section{Analysis}
\label{Analysis}
We formulate the weighted \grouplasso problem as follows 
\begin{equation}
    \min_{\xx}\left\{\frac{1}{2}\|\CC\xx-\BB\bb\|^2 + \alpha \sum_{g \in G} \|\Cg\xxg\| \right\}, \label{eq:varCform}
\end{equation}
where 
\begin{itemize}
    \item $\AAA \in \mathbb{R}^{m \times n}$, $n > m$, i.e., $\AAA$ has a non-trivial null space.
    \item $\BB \in \mathbb{R}^{q \times m}$ and $\CC := \BB\AAA$.
    \item $G$ is the collection of groups, 
    \begin{equation*}
        G = \left\{ g_1,g_2, \ldots, g_{|G|} \right\}
    \end{equation*}
    and $g_1,g_2, \ldots, g_{|G|}$ are Euclidean vectors containing the indexes associated with each group. For example, 
    \begin{equation*}
        g_1=(g_1(1), g_1(2), \ldots, g_1(|g_1|))^T, 
    \end{equation*}
    where $g_1(1), g_1(2), \ldots, g_1(|g_1|)$ are the indexes of the members of group $g_1$. $|g_1|$ denotes the cardinality of group $1$.
    \item $\xxg$ is the sub-vector of $\xx$ and $\Cg$ is the sub-columns of $\CC$ associated with group $g \in G$.
\end{itemize}
Note that \eqref{eq:varCform} is not expressed in terms of the original lead field matrix $\AAA$, but instead we involve an additional matrix $\BB$ to obtain a more flexible formulation. 
Setting $\BB = \mathsf{I}$, the identity matrix, yields the standardized \grouplasso formulation first introduced in \cite{simon2012standardization}. Multiplying $\AAA \xx = \bb$ with $\BB$ leads to the equation $\BB \AAA \xx = \BB \bb$. Hence, $\BB$ can be "baked" directly into the forward problem, indicating that the role of $\BB$ might be rather irrelevant, but we will see in the numerical section that the choice of $\BB$ can, in practice, strongly influence the recovery properties of the method. We have not been able to prove theoretically why this is the case, but for the standard Lasso formulation, theoretical findings concerning this issue can be found in \cite{ENS25}.

We will now prove exact recovery results for both single and multiple group scenarios. These proofs are fairly short and rather transparent.

\subsection{Group pursuit}

Our first result is an exact recovery result for a single group source. In order to obtain uniqueness, we need to assume that any data explainable by a single group cannot be explained by any other single group.
\begin{assumption} \label{assumption_uniqueness}
    For any $g_1, g_2 \in G$, $g_1 \neq g_2$, we assume that the vectors 
    \begin{equation*}
        \CC \eGiOne{1}, \CC \eGiOne{2}, \ldots, \CC \eGiOne{|g_1|},  \CC \eGiTwo{1}, \CC \eGiTwo{2}, \ldots, \CC \eGiTwo{|g_2|}
    \end{equation*}
    are linearly independent. 
\end{assumption}
This assumption assures that there can {\em not} exist scalars $\{ a_i \}$ and $\{ b_i\}$, not all of them zero, and a constant $c$ such that 
\begin{equation*}
    \sum_i a_i \CC \eGiOne{i} = c \sum_i b_i \CC \eGiTwo{i}
\end{equation*}
\begin{theorem}[Single group source]\label{thm:grouppursuit}
    Assume that $\xx^*$ is a single group source, i.e., $\textnormal{supp}(\xx^*) = g^*$ for some $g^* \in G$.  
    Then 
    \begin{equation*}
         \xx^* \in \argmin_{\xx} \sum_{g \in G} \|\Cg\xxg\| \quad \textnormal{subject to} \quad \CC\xx = \CC\xx^*.
    \end{equation*}
    If, in addition, Assumption \ref{assumption_uniqueness} is satisfied, then $\xx^*$ is the unique solution of this single group pursuit problem. 
\end{theorem}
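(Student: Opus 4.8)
The plan is to settle the membership claim with a one-line triangle-inequality estimate and then to bootstrap it to uniqueness by analysing when that estimate is tight, using Assumption~\ref{assumption_uniqueness} to eliminate every group other than $g^*$.

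For the membership part I would first record that, since $\textnormal{supp}(\xx^*)=g^*$, every block $\xx^*_g$ with $g\neq g^*$ vanishes, so $\sum_{g\in G}\|\Cg\xx^*_g\|=\|\Cgs\xxsgs\|=\|\CC\xx^*\|$, the last identity holding because $\CC\xx^*=\sum_{g}\Cg\xx^*_g=\Cgs\xxsgs$. Then, for any feasible $\xx$, i.e.\ any $\xx$ with $\CC\xx=\CC\xx^*$, the triangle inequality gives $\sum_{g\in G}\|\Cg\xxg\|\ge\bigl\|\sum_{g\in G}\Cg\xxg\bigr\|=\|\CC\xx\|=\|\CC\xx^*\|$, which is precisely the value attained at $\xx^*$; this is a global bound and remains valid in the degenerate case $\CC\xx^*=\mathbf 0$, where the minimum is $0$. (One could instead exhibit the dual certificate $\zz=-\CC\xx^*/\|\CC\xx^*\|$, whose unit norm places $-\Cg^{T}\zz$ in the relevant subdifferential of $\yy\mapsto\|\Cg\yy\|$ at $\xx^*_g$ for every $g$, but the triangle-inequality route is shorter.)

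For uniqueness I would assume $|G|\ge 2$ and Assumption~\ref{assumption_uniqueness}. First note that the assumption, applied to any group $g$ together with any other group, makes the columns $\CC\eGi$, $i=1,\ldots,|g|$, linearly independent, so every $\Cg$ has full column rank and in particular $\CC\xx^*=\Cgs\xxsgs\neq\mathbf 0$ once $\xx^*\neq\mathbf 0$ (the case $\xx^*=\mathbf 0$ being trivial). Now take a feasible minimiser $\xx$; then the triangle inequality above is an equality, and the equality case says precisely that all nonzero vectors among $\{\Cg\xxg\}_{g\in G}$ are nonnegative multiples of one common unit vector $\uu$, say $\Cg\xxg=\lambda_g\uu$ with $\lambda_g\ge0$. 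Summing over $g$ gives $(\sum_g\lambda_g)\,\uu=\CC\xx=\CC\xx^*\neq\mathbf 0$, so $\uu$ is a positive multiple of $\Cgs\xxsgs$. If some $g\neq g^*$ had $\xxg\neq\mathbf 0$, then $\Cg\xxg\neq\mathbf 0$ (full column rank), hence $\lambda_g>0$ and $\Cg\xxg=c\,\Cgs\xxsgs$ for some $c>0$; expanding both sides in the two groups' column bases turns this into a nontrivial linear relation among $\CC\eGi$ ($i=1,\ldots,|g|$) and $\CC\eGsk$ ($k=1,\ldots,|g^*|$) --- nontrivial because its coefficients on the $\CC\eGi$ are the not-all-zero entries of $\xxg$ --- contradicting Assumption~\ref{assumption_uniqueness} for the pair $g,g^*$. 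Hence $\xxg=\mathbf 0$ for all $g\neq g^*$, the constraint collapses to $\Cgs\xxgs=\Cgs\xxsgs$, and full column rank of $\Cgs$ forces $\xxgs=\xxsgs$, i.e.\ $\xx=\xx^*$.

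The only step with real content is the equality analysis of the triangle inequality together with the bookkeeping that converts ``$\Cg\xxg$ parallel to $\CC\xx^*$'' into a forbidden linear dependence; I expect that to be the main --- essentially the only --- obstacle. It is worth noting that the pairwise structure of Assumption~\ref{assumption_uniqueness} is used in an essential way, so the uniqueness statement tacitly presumes at least two groups; with a single all-encompassing group the pursuit problem is degenerate and uniqueness additionally requires $\Cgs$ to be injective.
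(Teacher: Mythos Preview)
Your proof is correct and follows the same route as the paper's: the triangle inequality $\|\CC\yy\|\le\sum_{g}\|\Cg\yyg\|$ for the membership claim, and Assumption~\ref{assumption_uniqueness} to rule out equality when $\yy\neq\xx^*$ for uniqueness. The paper compresses the uniqueness step to a single remark (``the triangle inequality in this argument becomes strict''), whereas you explicitly work through the equality case and the full-column-rank consequence; your argument is more detailed but identical in spirit.
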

We note that this result follows from the block (group) restricted isometry property introduced in \cite{eldar09}. We nevertheless, for the sake of completeness, include our proof of this theorem as it is short, self-contained and serves as a simplified counterpart to the argument used in the multiple-source setting below.

\begin{proof}
    Let $\yy \neq \xx^*$ be any solution of $\CC\yy = \CC\xx^*$. Since $\xx^*$ has support on a single group, i.e., $\textnormal{supp}(\xx^*) = g^*$, it follows that
    \begin{eqnarray*}
        \sum_{g\in G} \|\Cg\xxg^*\| &=& \|\Cgs\xxsgs\| \\
        &=& \|\CC\xx^*\| \\ &=& \|\CC\yy\| \\ &=& \norm[\Big]{\sum_{g\in G} \Cg\yyg} \\ &\leq& \sum_{g\in G}\|\Cg\yyg\|.
    \end{eqnarray*}
    Note that if Assumption \ref{assumption_uniqueness} is satisfied, then the triangle inequality in this argument becomes strict.
\end{proof}

The next theorem concerns the identifiability of multiple groups. Under the assumption stated below, we present a result that, to the best of our knowledge, is new.
We will assume that the group supports of the images under $\CC^T \CC$ of the involved group members are disjoint. Here, the group image of $g \in G$ is defined as follows  
\begin{equation*}
    \zeta_g=\left\{ \tilde{g} \in G \, | \, \exists \ \xxg \mbox{ such that } \tilde{g}(j) \in \mbox{supp}(\CC^T \Cg \xxg) \mbox{ for some } j \in \{1,2, \ldots, |\tilde{g}| \} \right\}. 
\end{equation*}

\begin{assumption} \label{assumption:disjoint_group_images}
    For $J \subset G$ we assume that 
    \begin{equation*} 
        \zeta_{g_1} \bigcap \zeta_{g_2} = \emptyset \quad \forall g_1, g_2 \in J, g_1 \neq g_2.
    \end{equation*}
\end{assumption}
If $J$ is such that this assumption holds, then 
\begin{equation} \label{assumption:disjoint_CTC}
          \mbox{supp}(\CC^T \CC_{g_1}\xx_{g_1}) \bigcap  \mbox{supp}(\CC^T \CC_{g_2}\xx_{g_2})  = \emptyset \quad \forall g_1, g_2 \in J, g_1 \neq g_2.
    \end{equation}
    This is verified as follows: Assume that $q \in \mbox{supp}(\CC^T \CC_{g_1}\xx_{g_1})$ and that $q \in \mbox{supp}(\CC^T \CC_{g_2}\xx_{g_2})$. Then there exists $\tilde{g}_1 \in \zeta_{g_1}$ and $j_1$ such that $\tilde{g}_1(j_1)=q$. But, since $q \in \mbox{supp}(\CC^T \CC_{g_2}\xx_{g_2})$ it follows from the definition of $\zeta_{g_2}$ that $\tilde{g}_1$ also belongs to $\zeta_{g_2}$, i.e, $\zeta_{g_1} \bigcap \zeta_{g_2} \neq \emptyset$. 

\begin{theorem}[Recovery of disjoint groups] \label{thm:several_groups}
    Let $J \subset G$ be a set of group indices satisfying Assumption \ref{assumption:disjoint_group_images} and let $\xx^*$ be a vector which has support on the groups in $J$. 
    
    Then 
    \begin{equation*}
         \xx^* = \argmin_{\xx} \sum_{g \in G} \|\CC_g\xxg\| \quad \textnormal{subject to} \quad {\CC}\xx = {\CC}\xx^*.
    \end{equation*}
\end{theorem}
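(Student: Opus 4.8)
The plan is to run the argument behind Theorem~\ref{thm:grouppursuit} once more, but to replace its single triangle inequality by an explicit optimality (dual-certificate) computation, with Assumption~\ref{assumption:disjoint_group_images} entering exactly when I control the ``cross terms''. Assuming, as one may, that $\CC_g\xx_g^*\neq\mathbf{0}$ for every $g\in J$ (a group with $\CC_g\xx_g^*=\mathbf{0}$ contributes nothing and can be removed from $J$), I set
\begin{equation*}
  \uu_g := \frac{\CC_g\xx_g^*}{\|\CC_g\xx_g^*\|}\quad (g\in J), \qquad \pp := \sum_{g\in J}\uu_g .
\end{equation*}
The two facts to establish are: (a) for every group $g\in G$ there is $\vv_g$ with $\|\vv_g\|\leq 1$ and $\CC_g^{T}\pp=\CC_g^{T}\vv_g$; and (b) for every feasible $\yy$ (i.e.\ $\CC\yy=\CC\xx^*$),
\begin{align*}
  \sum_{g\in G}\|\CC_g\yy_g\| &\;\geq\; \sum_{g\in G}\langle\vv_g,\CC_g\yy_g\rangle = \sum_{g\in G}\langle\CC_g^{T}\pp,\yy_g\rangle \\
  &\;=\; \langle\pp,\CC\yy\rangle = \langle\pp,\CC\xx^*\rangle = \sum_{g\in J}\|\CC_g\xx_g^*\| = \sum_{g\in G}\|\CC_g\xx_g^*\|,
\end{align*}
where the first step is Cauchy--Schwarz with $\|\vv_g\|\leq 1$, and the penultimate equality uses $\CC_g^{T}\pp=\CC_g^{T}\uu_g$ for $g\in J$ (part of (a)) together with $\langle\uu_g,\CC_g\xx_g^*\rangle=\|\CC_g\xx_g^*\|$. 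Items (a) and (b) together give $\xx^*\in\argmin$.

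The crux is (a). First, $g\in\zeta_g$ for every $g\in J$: pairing $\CC^{T}\CC_g\xx_g^*$ with (the extension of) $\xx_g^*$ yields $\|\CC_g\xx_g^*\|^2>0$, so some index of group $g$ lies in $\textnormal{supp}(\CC^{T}\CC_g\xx_g^*)$. Next, since $\textnormal{supp}(\CC^{T}\CC_h\xx_h^*)$ is contained in the union of the index sets of the groups in $\zeta_h$, the group-$g$ block of $\CC^{T}\CC_h\xx_h^*$ — namely $\CC_g^{T}\CC_h\xx_h^*$, a positive multiple of $\CC_g^{T}\uu_h$ — vanishes whenever $g\notin\zeta_h$. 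Combining these with the disjointness $\zeta_{h_1}\cap\zeta_{h_2}=\emptyset$ on $J$: for $g\in J$ and every $h\in J\setminus\{g\}$ one has $g\notin\zeta_h$ (because $g\in\zeta_g$), hence $\CC_g^{T}\uu_h=\mathbf{0}$ and $\CC_g^{T}\pp=\CC_g^{T}\uu_g$, so take $\vv_g:=\uu_g$; for $g\notin J$ the index $g$ belongs to $\zeta_h$ for at most one $h\in J$ (two would contradict disjointness), so $\CC_g^{T}\pp=\CC_g^{T}\uu_{h_0}$ for that single $h_0$ (or $\CC_g^{T}\pp=\mathbf{0}$), and take $\vv_g:=\uu_{h_0}$ (resp.\ $\vv_g:=\mathbf{0}$). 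In every case $\|\vv_g\|\leq 1$, proving (a).

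To upgrade ``$\xx^*\in\argmin$'' to the equality stated in the theorem, I would track when the inequalities above are tight: equality in the Cauchy--Schwarz steps forces $\CC_g\yy_g$ to be a nonnegative multiple of $\uu_g$ for $g\in J$ and of $\uu_{h_0}$ (or $\mathbf{0}$) for $g\notin J$. Substituting these into $\sum_{g\in G}\CC_g\yy_g=\CC\xx^*=\sum_{h\in J}\|\CC_h\xx_h^*\|\,\uu_h$ and using that $\{\uu_h\}_{h\in J}$ is linearly independent — itself a consequence of Assumption~\ref{assumption:disjoint_group_images}, seen by applying $\CC_h^{T}$ and invoking $\CC_h^{T}\uu_h\neq\mathbf{0}$ — isolates the contributions group by group; a further injectivity hypothesis in the spirit of Assumption~\ref{assumption_uniqueness} (which appears genuinely needed, already when $|J|=1$) then yields $\yy=\xx^*$.

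I expect the only real obstacle to be bookkeeping: turning ``disjoint group images'' into the vanishing of the blocks $\CC_g^{T}\CC_h\xx_h^*$ through the support description of $\CC^{T}\CC_h\xx_h^*$ and the observation $g\in\zeta_g$, together with the separate handling of $g\notin J$ via the ``at most one $h$'' fact. Once these cross terms are pinned down, the remainder is the standard certificate computation in (b), and should be no longer than the proof of Theorem~\ref{thm:grouppursuit}.
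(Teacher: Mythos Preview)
Your argument is correct and establishes $\xx^*\in\argmin$, which is exactly what the paper's own proof achieves (the ``$=$'' in the theorem statement is not actually proved in the paper either; you are right that an additional injectivity hypothesis in the spirit of Assumption~\ref{assumption_uniqueness} would be needed for genuine uniqueness). The bookkeeping you flag --- $g\in\zeta_g$, the vanishing of $\CC_g^T\CC_h\xx_h^*$ whenever $g\notin\zeta_h$, and the ``at most one $h$'' fact for $g\notin J$ --- is all sound and is precisely the content of Assumption~\ref{assumption:disjoint_group_images}.

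However, your route is genuinely different from the paper's. The paper proceeds by a direct chain of inequalities: it writes $\|\CC_g\xx_g^*\|^2=(\CC_g\xx_g^*,\CC\xx^*)=(\CC_g\xx_g^*,\CC\yy)$, reduces the latter inner product to $(\CC_g\xx_g^*,\sum_{\tilde g\in\zeta_g}\CC_{\tilde g}\yy_{\tilde g})$ using the same support considerations you identify, and then applies Cauchy--Schwarz twice (once for vectors, once for the discrete sums over $J$) followed by the triangle inequality and the disjointness of $\{\zeta_g\}_{g\in J}$. Your approach instead builds an explicit dual certificate $\pp=\sum_{g\in J}\uu_g$ and verifies the block condition $\|\vv_g\|\le 1$ group by group. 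The paper's argument is more elementary and requires no duality vocabulary; your certificate argument is more conceptual, makes the role of the assumption transparent (it is exactly what keeps the certificate bounded on each block), and would transfer directly to the first-order optimality analysis of the regularized problem~\eqref{eq:varCform}, where $\pp$ is the natural subgradient candidate.
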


\begin{proof}
    Let $\yy$ be any vector satisfying $\CC\yy = \CC\xx^*$. Then
    \begin{eqnarray*}
        \sum_{g \in G} \|\Cg\xxg^*\| &=& \sum_{g \in J} \|\Cg\xxg^*\|  
        = \sum_{g \in J} \sqrt{(\Cg\xxg^*, \Cg\xxg^*)} \\
        &=& \sum_{g \in J} \sqrt{(\Cg\xxg^*,\CC\xx^*)}  
        = \sum_{g \in J} \sqrt{(\Cg\xxg^*, \CC\yy)} \\
        &=& \sum_{g \in J} \sqrt{\left(\Cg\xxg^*, \sum_{\tg \in \zeta_g}\CC_{\tg}\yy_{\tg}\right)} \\
        &\leq& \sum_{g \in J} \sqrt{\norm{\Cg\xxg^*}}\sqrt{\norm[\Big]{\sum_{\tg \in \zeta_g}\CC_{\tg}\yy_{\tg}}} \\ 
        &\leq& \sqrt{\sum_{g \in J} \|\Cg\xxg^*\|}\sqrt{\sum_{g \in J} \norm[\Big]{\sum_{\tg \in \zeta_g}\CC_{\tg}\yy_{\tg}}} \\
        &\leq& \sqrt{\sum_{g \in G} \|\Cg\xxg^*\|}\sqrt{\sum_{g \in J} \norm[\Big]{\sum_{\tg \in \zeta_g}\CC_{\tg}\yy_{\tg}}}.
    \end{eqnarray*}
    Consequently, also employing the triangle inequality, 
    \begin{align*}
        \sum_{g \in G} \|\Cg\xxg^*\| &\leq \sum_{g \in J} \norm[\Big]{\sum_{\tg \in \zeta_g}\CC_{\tg}\yy_{\tg}} \\
        & \leq \sum_{g \in J} \sum_{\tg \in \zeta_g} \norm[\Big]{\CC_{\tg}\yy_{\tg}} \\ &\leq \sum_{g \in G}\|\Cg\yyg\|,
    \end{align*}
    where the last inequality follows from the assumption that the sets $\{ \zeta_g \}_{g \in J}$ are mutually disjoint and the fact that $$\bigcup_{g \in J} \zeta_g \subset G.$$
\end{proof}

\subsection{\grouplasso}
In practice, the group pursuit problem is rarely solved: Instead, one typically considers its variational counterpart given in \eqref{eq:varCform}. Due to the trade-off between the fidelity and regularization terms, exact recovery is generally not achievable. However, as the next theorem demonstrates, the solution obtained in this regularized setting equals a rescaled version of the true group source.

\begin{theorem}\label{thm:grouplasso}
  Assume that $\xx^*$ is a single group source, i.e., $\textnormal{supp}(\xx^*) = g^*$ for some $g^* \in G$. Then
  \begin{equation*}
      \xx_\alpha = \gamma_\alpha\xx^*  \in \argmin_\xx \left\{ \frac{1}{2}\|\CC\xx - \CC\xx^*\|^2 + \alpha \sum_{g \in G} \|\Cg\xxg\|\right\},
  \end{equation*}
  where $\gamma_\alpha = 1 - \frac{\alpha}{\|\Cgs\xxgs^*\|}$ and provided that $0 < \alpha < \|\Cgs\xxgs^*\|$.
\end{theorem}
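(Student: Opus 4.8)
The plan is to use the first-order optimality (subdifferential) condition for the convex functional in \eqref{eq:varCform} with $\BB\bb$ replaced by $\CC\xx^*$, and to verify directly that the candidate $\xx_\alpha = \gamma_\alpha\xx^*$ satisfies it. Writing $F(\xx) = \frac12\|\CC\xx - \CC\xx^*\|^2 + \alpha\sum_{g\in G}\|\Cg\xxg\|$, the condition for $\xx_\alpha$ to be a minimizer is $\mathbf{0}\in \CC^T(\CC\xx_\alpha - \CC\xx^*) + \alpha\,\partial\bigl(\sum_{g}\|\Cg\xxg\|\bigr)$ evaluated at $\xx_\alpha$. Because the regularizer is a sum over groups, the subdifferential splits group-wise, so I need: (i) on the active group $g^*$, where $(\xx_\alpha)_{g^*} = \gamma_\alpha\xxgs^* \neq \mathbf 0$ since $\gamma_\alpha > 0$, the gradient of $\|\Cg\cdot\|$ is classical; (ii) on every inactive group $g\neq g^*$, where $(\xx_\alpha)_g = \mathbf 0$, I must exhibit a valid subgradient element and check the associated norm bound is $\le 1$.

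First I would compute the residual: since $\xx_\alpha = \gamma_\alpha\xx^*$ and $\CC\xx_\alpha = \gamma_\alpha\CC\xx^* = \gamma_\alpha\Cgs\xxgs^*$, we get $\CC\xx_\alpha - \CC\xx^* = (\gamma_\alpha - 1)\Cgs\xxgs^* = -\frac{\alpha}{\|\Cgs\xxgs^*\|}\Cgs\xxgs^*$. Next, for the active group, the chain rule gives $\partial_{\xxgs}\|\Cgs\xxgs\|\big|_{\xx_\alpha} = \frac{\Cgs^T\Cgs(\gamma_\alpha\xxgs^*)}{\|\Cgs(\gamma_\alpha\xxgs^*)\|} = \frac{\Cgs^T\Cgs\xxgs^*}{\|\Cgs\xxgs^*\|}$ (the positive scalar $\gamma_\alpha$ cancels). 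Restricting the optimality inclusion to the $g^*$-block, the term $\CC^T(\CC\xx_\alpha - \CC\xx^*)$ restricted to group $g^*$ is exactly $-\frac{\alpha}{\|\Cgs\xxgs^*\|}\Cgs^T\Cgs\xxgs^*$, which cancels against $\alpha$ times the gradient above — so the active block of the optimality condition holds with equality, and this is where the precise formula for $\gamma_\alpha$ gets forced. For the inactive groups $g \neq g^*$, the block of $-\CC^T(\CC\xx_\alpha-\CC\xx^*)$ is $\frac{\alpha}{\|\Cgs\xxgs^*\|}\Cg^T\Cgs\xxgs^*$; I need this to equal $\alpha$ times some $\pp_g$ in the subdifferential of $\|\Cg\cdot\|$ at $\mathbf 0$. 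The subdifferential of $\yy\mapsto\|\Cg\yy\|$ at $\mathbf 0$ is $\{\Cg^T\zz : \|\zz\|\le 1\}$ (or, when $\Cg$ has trivial kernel, the set of $\pp$ with $\|(\Cg^T)^{+}\pp\|\le 1$); taking $\zz = \frac{1}{\|\Cgs\xxgs^*\|}\Cgs\xxgs^*$ gives $\|\zz\| = 1 \le 1$, so $\pp_g = \Cg^T\zz$ is admissible, and this is exactly the required block. Hence the full inclusion holds.

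The main obstacle — and the step deserving the most care — is the handling of the subdifferential at $\mathbf 0$ for the inactive groups: one must state correctly what $\partial\|\Cg\cdot\|(\mathbf 0)$ is (the commented-out section of the paper develops essentially this via Cauchy–Schwarz, but here the norm is $\|\Cg\yy\|$ rather than a weighted $\ell^2$ norm of $\yy$, so the argument must be adapted) and then exhibit the explicit subgradient $\Cg^T\zz$ with $\|\zz\| = 1$. I would therefore first prove the small lemma that $\pp\in\partial\|\Cg\cdot\|(\mathbf 0)$ iff $\pp = \Cg^T\zz$ for some $\zz$ with $\|\zz\|\le 1$ (the "if" direction is immediate from Cauchy–Schwarz: $\|\Cg\yy\| \ge (\Cg\yy,\zz) = (\yy,\Cg^T\zz)$; the "only if" direction can be obtained from a standard duality/support-function argument, or sidestepped entirely since for the proof we only need the "if" direction to produce a valid subgradient). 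Finally I would note that when $\gamma_\alpha > 0$, i.e. $\alpha < \|\Cgs\xxgs^*\|$, the active block computation is legitimate because the norm in the denominator does not vanish and the scaling cancels cleanly; the upper constraint on $\alpha$ is exactly what keeps $\xx_\alpha$ a nonzero multiple of $\xx^*$ rather than forcing a sign change or the trivial solution.
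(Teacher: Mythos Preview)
Your proof is correct, but it takes a different route from the paper's. You verify the first-order optimality condition directly: you compute the residual $\CC\xx_\alpha-\CC\xx^*=-\tfrac{\alpha}{\|\Cgs\xxgs^*\|}\Cgs\xxgs^*$, check that on the active block the gradient $\Cgs^T\Cgs\xxgs^*/\|\Cgs\xxgs^*\|$ cancels it exactly, and on each inactive block exhibit the subgradient $\Cg^T\zz$ with $\zz=\Cgs\xxgs^*/\|\Cgs\xxgs^*\|$, $\|\zz\|=1$, which lies in $\partial\|\Cg\cdot\|(\mathbf 0)$ by Cauchy--Schwarz. The paper instead avoids subdifferential calculus entirely: after locating the candidate $\gamma_\alpha$ by minimizing over the one-parameter family $\gamma\xx^*$, it bounds the full objective from below by a function of the scalar $\rho=\|\CC\xx\|$ alone, using Cauchy--Schwarz on the fidelity term and the triangle inequality on the regularizer, then minimizes that scalar function and observes that both inequalities are tight at $\gamma_\alpha\xx^*$. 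Your approach is the standard convex-analysis argument and makes transparent exactly which subgradient certifies optimality; the paper's approach is more elementary (no subdifferentials) and has the minor advantage that it sidesteps the need to characterize $\partial\|\Cg\cdot\|(\mathbf 0)$ at all, though at the cost of a slightly ad hoc lower-bound construction. Either argument is short and complete.
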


\begin{proof}
    Our proof consists of two steps. First, we determine a candidate minimizer by considering possible solutions in the form $\xx = \gamma\xx^*$, $\gamma \in \mathbb{R}$. 
    Second, we use elementary inequalities to demonstrate that this solution is also a global minimum.

    Focusing on the first step, we insert $\xx = \gamma\xx^*$ in the cost-functional. Recalling that $\CC\xx^* = \Cgs\xxgs^*$, we get
        \begin{equation*}
        \frac{1}{2}\|\CC\xx-\CC\xx^*\|^2 + \alpha\sum_{g \in G} \|\CC\xx\| = \frac{1}{2} (1-\gamma)^2\|\Cgs\xxgs^*\|^2 + \alpha \gamma \|\Cgs\xxgs^*\|.
    \end{equation*}
    Minimizing this expression with respect to $\gamma$ is a straightforward task and yields that 
    \begin{equation*}
        \gamma_\alpha = 1 - \frac{\alpha}{\|\Cgs\xxgs\|}.
    \end{equation*}
    Note that $\gamma_\alpha > 0$ when $0 < \alpha < \|\Cgs\xxgs^*\|$. 
        
    For the second part of the proof, we employ the Cauchy-Schwarz inequality to get 
    \begin{align}
        \nonumber
        \frac{1}{2}\|\CC\xx-\CC\xx^*\|^2 &= \frac{1}{2}\|\CC\xx\|^2 -(\CC\xx,\CC\xx^*) + \frac{1}{2}\|\CC\xx^*\|^2 \\ 
        \label{eq:group_lasso_first_inequality}
        &\geq
        \frac{1}{2}\|\CC\xx\|^2 - \|\CC\xx\|\|\CC\xx^*\| + \frac{1}{2}\|\CC\xx^*\|^2, 
    \end{align}
    and the triangle inequality yields
    \begin{equation} \label{eq:group_lasso_second_inequality}
        \sum_{g \in G} \|\Cg\xxg\| \geq \norm[\Big]{\sum_{g \in G}\Cg\xxg} = \|\CC\xx\|.
    \end{equation}
    By combining these expression we derive that
    \begin{align*}
        \overbrace{\frac{1}{2}\|\CC\xx-\CC\xx^*\|^2 + \alpha \sum_{g \in G} \|\Cg\xxg\|}^{:=\GG(\xx)} &\geq \overbrace{\frac{1}{2}\|\CC\xx\|^2 - \|\CC\xx\|\|\CC\xx^*\| + \frac{1}{2}\|\CC\xx^*\|^2 + \alpha \|\CC\xx\|}^{:= \FF(\xx)} \\
        & = \frac{1}{2} \rho^2 - \rho \|\CC\xx^*\| + \frac{1}{2}\|\CC\xx^*\|^2 + \alpha \rho,
    \end{align*}
    where $\rho =\|\CC\xx||$. 
    
    Minimizing the last expression with respect to $\rho$ yields that 
    \begin{equation*}
        \rho^* = \|\CC\xx^*\| - \alpha.
    \end{equation*}
    Since $\|\CC \gamma_\alpha\xx^*\| = \|\CC\xx^*\| - \alpha = \rho^*$, $\gamma_\alpha\xx^*$ is a minimizer for $\FF$. With $\xx=\gamma_\alpha\xx^*$, both \eqref{eq:group_lasso_first_inequality} and \eqref{eq:group_lasso_second_inequality} hold with equality and therefore $\GG(\gamma_\alpha\xx^*) = \FF(\gamma_\alpha\xx^*)$. We thus conclude that $\gamma_\alpha\xx^*$ also is a minimizer of $\GG$. 
\end{proof}

We have not succeeded in proving a regularized version of Theorem \ref{thm:several_groups}. In the several groups case, the first step of the proof of Theorem \ref{thm:grouplasso} can be generalized in a straightforward manner. However, the second part becomes very challenging. Also, analyzing the issue in terms of the standard first-order optimality condition leads to very involved investigations of subgradients and will doubtfully yield a very transparent argument. We hence regard the matter as an open problem.  

\section{Numerical experiments}
In this study, we explore the quality of the weighted \grouplasso formulation for EEG source reconstruction. We use the open-source MATLAB based toolbox SEREEGA \cite{krol2018sereega} to generate event related EEG data. For all the experiments, a pre-generated New York Head (ICBM-NY) lead field is used as the forward model. This provides the lead field matrix $\AAA \in \mathbb{R}^{m \times 3n}$, where $m$ is the number of EEG electrodes and $n = 74382$ is the number of dipole source positions distributed across the brain. Each dipole is represented by three orientation components $(x,y,z)$, resulting in $3n$ columns. The cortical volume of the ICBM-NY model has a size of $144\times179\times133$ $mm^3$, which roughly corresponds to the dimensions of a typical adult human cerebral cortex.

For the inverse problem, we construct a reduced lead field matrix $\AAA_r \in \mathbb{R}^{m \times 3p}$ by selecting a random subset of $p = 20000$ dipole positions from the lead field $\AAA$ , with a minimum separation of $2$ mm between them. The remaining dipoles are then used to generate the forward data, hence avoiding an inverse crime. Note that with this setup, it is impossible to perfectly recover the positions of the true dipole sources

The weighted \grouplasso formulation, 
\begin{equation*}
    \min_{\xx}\left\{\frac{1}{2}\|\CC\xx-\BB\bb\|^2 + \alpha \sum_{g \in G} \|\Cg\xxg\| \right\},
\end{equation*}
where $\CC = \BB\AAA$, is solved using the Block Coordinate Descent algorithm \cite{qin2013efficient}. We compute simulations for the following cases: 
\begin{itemize}
    \item $\BB = \II$, the identity matrix, which implies that we do not modify the forward matrix.
    \item $\BB = \AK^\dagger$, the truncated SVD of $\AAA$, using $k = 150$ singular vectors and their associated singular values. This choice is motivated by our attempt to approximately satisfy \eqref{assumption:disjoint_CTC}, which we elaborate on for the standard sparsity case, i.e., groups with only one member, in our previous work \cite{ENS25}.
    
\end{itemize}

A Gaussian noise of 1\% is added to the generated data. To avoid the simplest case where the dipoles are very close to the EEG electrodes, we only use true sources located a minimum of $15$ mm away from the closest EEG electrode. For each simulation, an adequate size of the regularization parameter $\alpha$ is obtained by applying Morozov's discrepancy principle.
To measure the accuracy of the localization, we calculate the dipole localization error (DLE): The Euclidean distance between the position of the true source and the position of the estimated dipole. We also compute the dipole orientation error (DOE): The angle between the true and the inversely estimated dipole orientations, measured in radians.

We perform 100 simulations with a single source activation for each trial, and we use a 228-channel EEG recording. The mean DLE values obtained were $5.8461$ mm and $12.6366$ mm when $\BB = \AK^\dagger$ and $\BB = \II$, respectively, with a theoretical minimum of $1.6485$ mm.   The corresponding mean DOE values were $0.2207$ rad $(12.64^\circ)$ and $0.2448$ rad $(14.02^\circ)$ for $\BB = \AK^\dagger$ and $\BB = \II$, respectively.

A scatter plot illustrating the depth bias observed in these simulations is shown in Figure \ref{plot:depth_bias}.
The x-axis represents the depth of the true source, and the y-axis represents the depth of the inverse source, with the ideal case represented as the red dotted line. 
Note that the performance of the method is acceptable if the results are closely clustered around the line $y = x$. We can observe that in the case of $\BB  = \II$, some values are spread out and lie below the dotted line, indicating less precision. However, it is clearly visible that the accuracy of the source localization improves significantly when employing $\BB=\AK^\dagger$. 

Figures \ref{plot:comparision_two} and \ref{plot:comparision_three} display comparisons of the true and the inverse source locations for cases where two and three sources are activated simultaneously. These plots are generated using a SEREEGA toolbox function, which displays a 2D representation of the brain in coronal, sagittal, and axial planes, respectively.

    \begin{figure}[H]
\centering
\begin{subfigure}{0.65\textwidth}
    \centering
    \includegraphics[width=\linewidth]{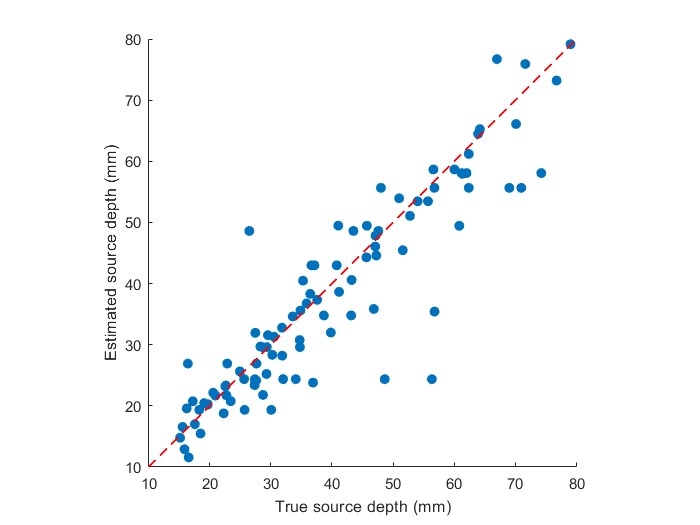}
    \caption{$\BB = \II $}
\end{subfigure}\par
\begin{subfigure}{0.65\textwidth}
    \centering
    \includegraphics[width=\linewidth]{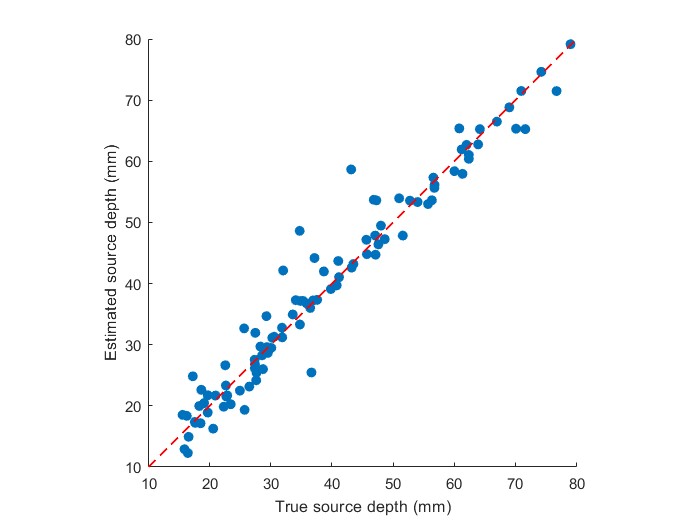}
    \caption{ $\BB = \AK^\dagger $}
\end{subfigure}
\caption{Scatter plots for the true source depth versus the estimated source depth. If all points are located on the dashed red line $x = y$, there is no depth bias. Simulations for two different choices of $\BB$.}
\label{plot:depth_bias}
\end{figure}

\begin{figure}[H]
\centering
\begin{subfigure}{0.75\textwidth}
    \centering
    \includegraphics[width=\linewidth]{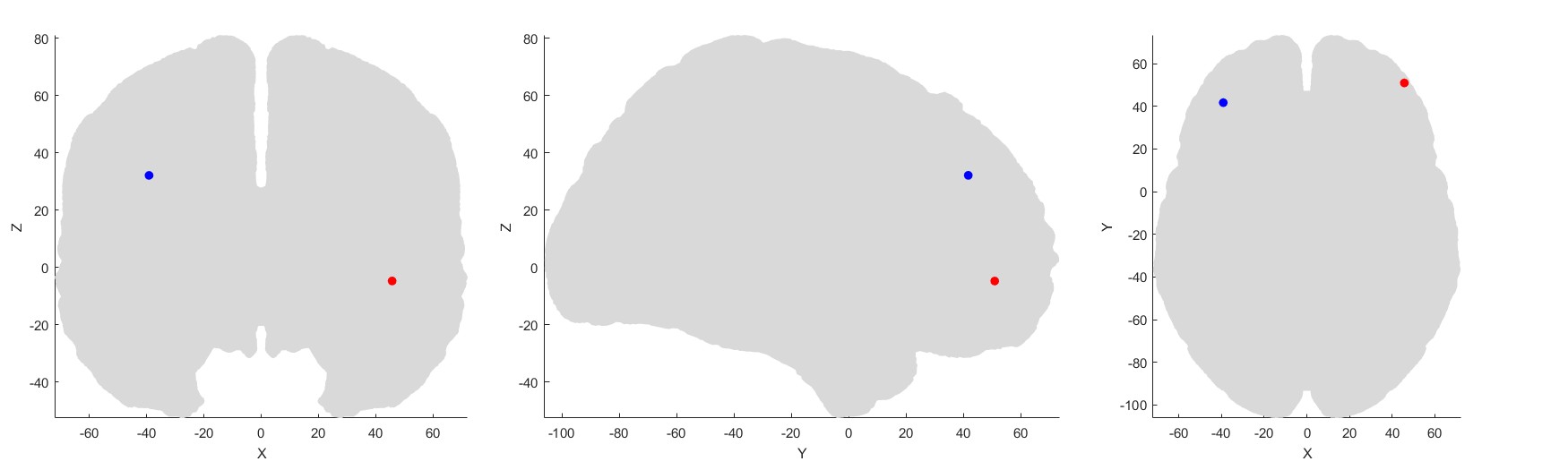}
    \caption{True sources}
\end{subfigure}\par
\begin{subfigure}{0.75\textwidth}
    \centering
    \includegraphics[width=\linewidth]{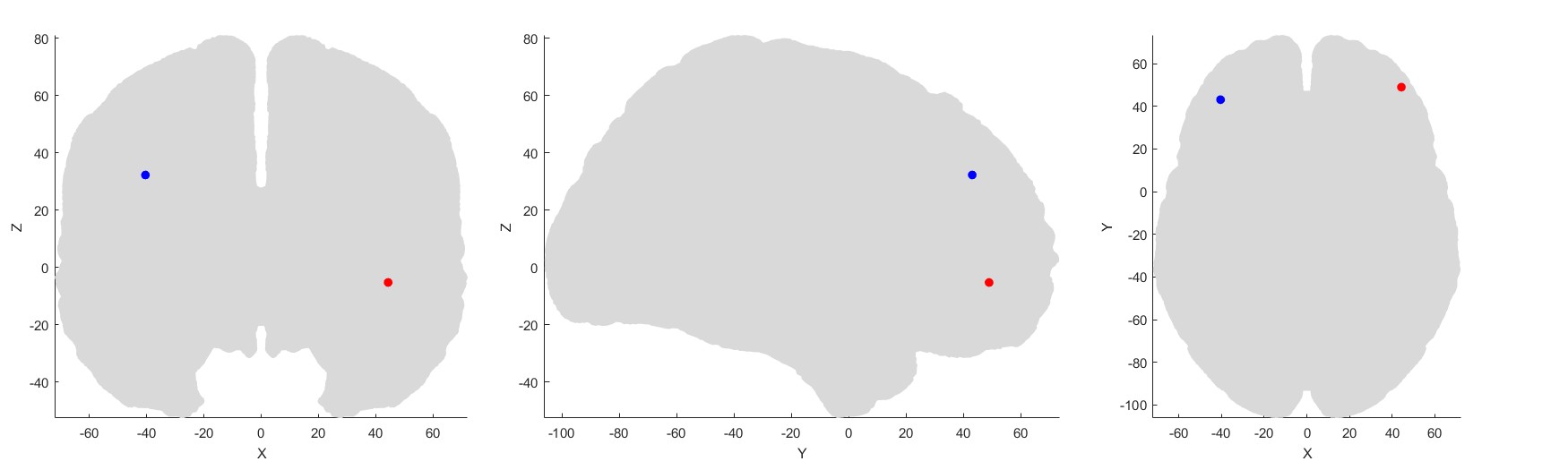}
    \caption{Estimated (inverse) sources when $\BB = \AK^\dagger$}
\end{subfigure}
\begin{subfigure}{0.75\textwidth}
    \centering
    \includegraphics[width=\linewidth]{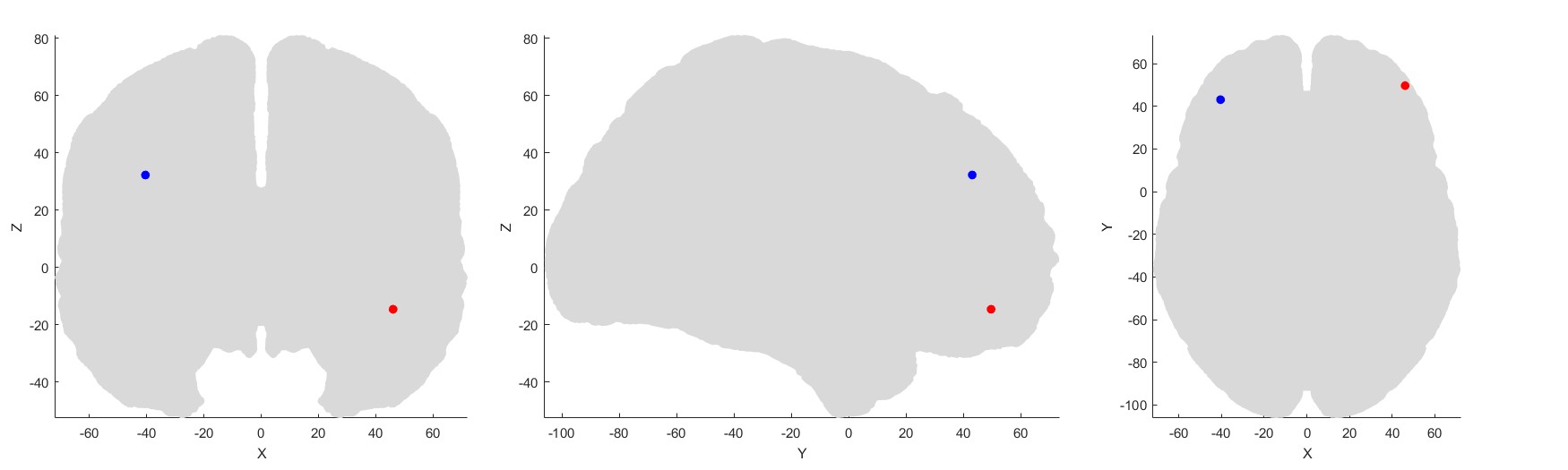}
    \caption{Estimated (inverse) sources when $\BB = \II$}
\end{subfigure}
\caption{Two-source reconstruction with average DOE values of $0.0455$ rad $(2.60^\circ)$ and $0.1378$ rad $(7.89^\circ)$ when $\BB = \AK^\dagger$ and $\BB = \II$, respectively.}
\label{plot:comparision_two}
\end{figure}

\begin{figure}[H]
\centering
\begin{subfigure}{0.75\textwidth}
    \centering
    \includegraphics[width=\linewidth]{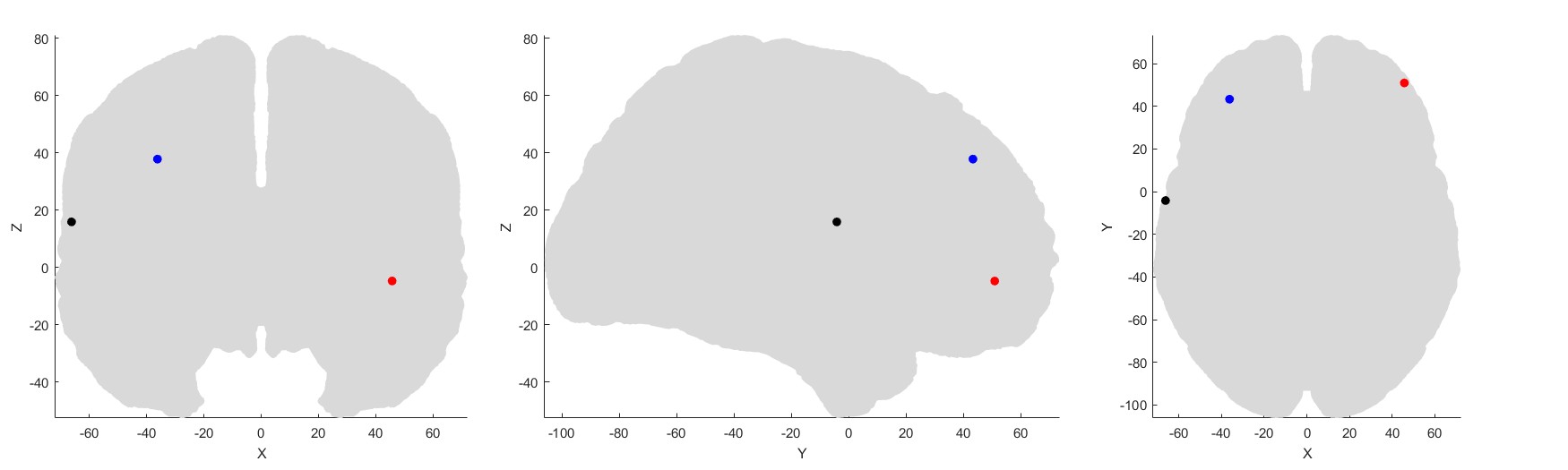}
    \caption{True sources}
\end{subfigure}\par
\begin{subfigure}{0.75\textwidth}
    \centering
    \includegraphics[width=\linewidth]{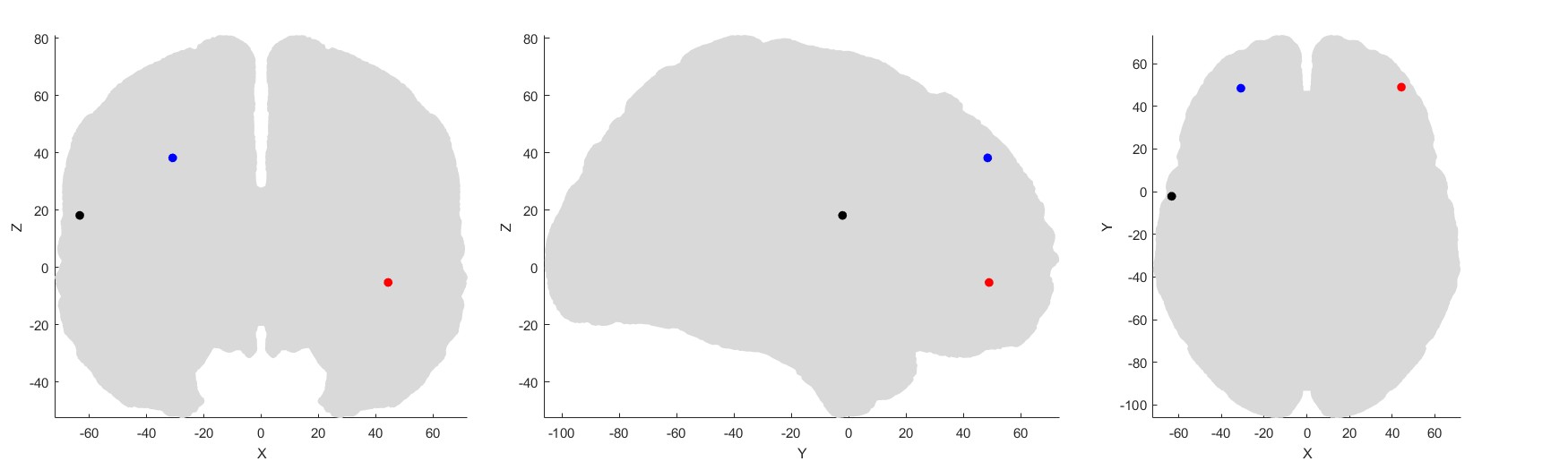}
    \caption{Estimated (inverse) sources when $\BB = \AK^\dagger$}
\end{subfigure}
\begin{subfigure}{0.75\textwidth}
    \centering
    \includegraphics[width=\linewidth]{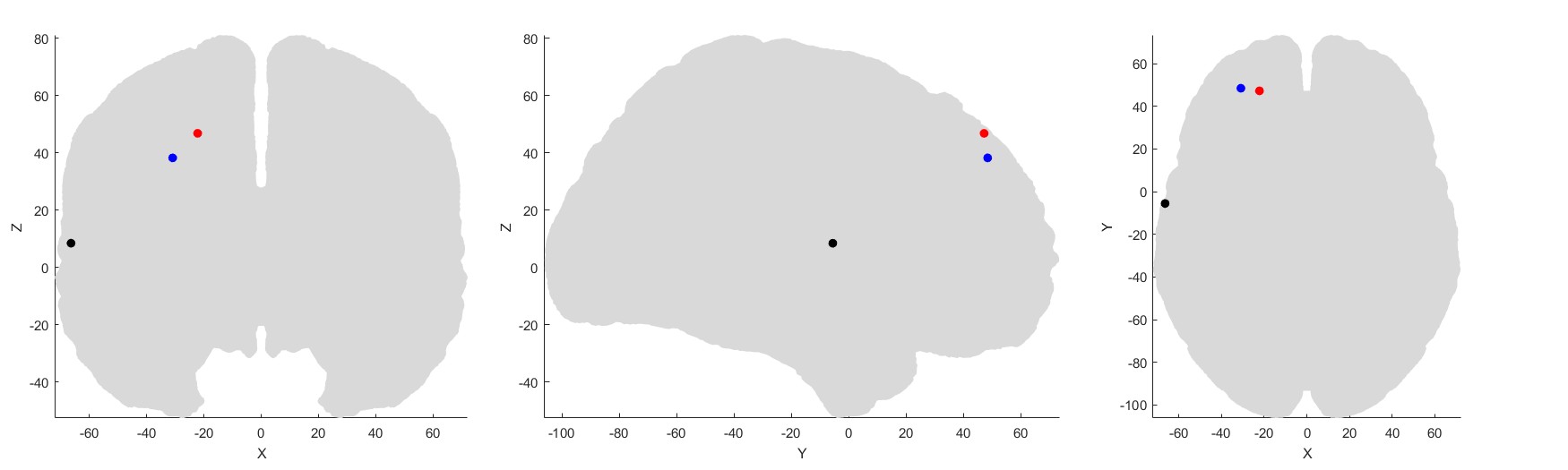}
    \caption{Estimated (inverse) sources when $\BB = \II$}
\end{subfigure}
\caption{Three-source reconstruction with average DOE values of $0.2985$ rad $(17.1^\circ)$ and $0.5931$ rad $(33.9^\circ)$ when $\BB = \AK^\dagger$ and $\BB = \II$, respectively.}
\label{plot:comparision_three}
\end{figure}

\section{Conclusion}
This work has introduced a weighted Group Lasso framework tailored for the static EEG inverse problem, emphasizing the joint recovery of source location and orientation. By treating the three dipole components at each spatial position as a unified group, the proposed method effectively addresses depth and orientation biases that commonly hinder traditional sparse reconstruction techniques. Our theoretical analysis provides transparent recovery guarantees for both single and multiple group scenarios, highlighting the interplay between weighting strategies and identifiability conditions.

Empirical results further demonstrate that while theoretical consistency holds under broad assumptions, practical performance is strongly influenced by the choice of weighting operator. In particular, incorporating a truncated Moore-–Penrose pseudoinverse significantly improves localization accuracy and orientation fidelity compared to standard weighted formulations. These findings suggest that weighted group-based regularization offers a promising direction for EEG source imaging, bridging the gap between theoretical rigor and physiological plausibility. Future research should explore the possibility of extending the framework to dynamic or multi-modal neuroimaging settings, potentially enabling more robust and interpretable reconstructions in real-world applications.

\bibliographystyle{abbrv} 
\bibliography{reference}
\end{document}